\documentclass[10pt, a4paper]{article}

\usepackage{cite, amsmath, amssymb, booktabs, lastpage, graphicx}
\usepackage[hidelinks]{hyperref}

\usepackage{geometry}
\usepackage{setspace}

\usepackage[mathscr]{euscript}
\usepackage{amsthm}
\usepackage{xcolor}
\theoremstyle{plain}
\newtheorem{lemma}{Lemma}
\newtheorem{theorem}{Theorem}

\newtheorem*{corollary}{Corollary}

\theoremstyle{remark}

\theoremstyle{definition}
\newtheorem{definition}{Definition}
\newtheorem{example}{Example}

\usepackage{graphicx}

\makeatletter
\renewcommand{\maketitle}{
\begin{center}

{\Large\bfseries \@title\par}
\vspace{6mm}

{\large\bfseries \@author\par}
\vspace{4mm}

{\itshape \@address\par}
\vspace{2mm}

{\small\ttfamily \@email\par}
\vspace{5mm}

\vspace{5mm}

\end{center}
}
\makeatother

\makeatletter
\newcommand{\address}[1]{\gdef\@address{#1}}
\newcommand{\email}[1]{\gdef\@email{#1}}
\makeatother

\address{}
\email{}

\usepackage{fancyhdr}

\usepackage[margin=1cm,%
			font=footnotesize,%
			format=hang,%
			labelsep=period,%
			labelfont=bf]{caption}
\allowdisplaybreaks

\title{Topological Indices of Divisor Prime Graphs}
\author{Purva J. Makadiya $^{a,}$\footnote{Corresponding author.}, Mahesh M. Jariya$^b$, Prashant J. Makadiya$^c$}
\address{$^{a,b}$Department of Mathematics, \\ Saurashtra University,  Rajkot-360005, \\ Gujarat, India.
    }
\email{purvamakadiya2000@gmail.com, mahesh.jariya@gmail.com, prashantmakadiya1996@gmail.com}

\date{\today}

\begin{document}

\maketitle
\thispagestyle{empty}

\begin{abstract}
Graph theory provides powerful tools for modeling concepts in number theory, leading to the introduction of graphs derived from arithmetic properties. One such structure is the divisor prime graph, $G_{Dp(n)}$. For any positive integer $n$, let $D(n)$ be the set of its positive divisors. The vertex set of $G_{Dp(n)}$ consists of the elements of $D(n)$, with the adjacency condition that two vertices $x$ and $y$ share an edge if and only if their greatest common divisor is $1$. The primary focus of this study is to evaluate the topological characteristics of $G_{Dp(n)}$. To achieve this, we analyze and compute various distance and degree-based indices, specifically focusing on the Wiener, Harary, hyper-Wiener, First and Second Zagreb, Schultz, Gutman, and Eccentric connectivity indices.
\end{abstract}

\vspace{2mm}
\noindent\textbf{Keywords:} Divisor prime graph; Topological indices; Wiener index; Harary index; Zagreb indices; Gutman index; Schultz index.

\vspace{1mm}
\noindent\textbf{2020 Mathematics Subject Classification:} 05C09; 05C12; 05C07.

\onehalfspacing
\section{Introduction}
Applying graph theory to number theory has led to important discoveries about mathematical structures. Creating graphs from sets of numbers is a reliable method for uncovering hidden features. By translating math relationships into visual networks, we can study abstract systems in new and effective ways.

In parallel, graph theory has become a cornerstone of theoretical chemistry. Ever since Harry Wiener introduced the Wiener index \cite{wiener1947structural} in 1947 to predict boiling points, scientists have used "topological indices" to connect molecular properties with their structural graphs. These mathematical metrics serve as useful tools to measure the distance, branching, and symmetry within any network.

Current research is bringing these two areas together by computing topological indices for number-based graphs. A perfect example of this connection is the \textit{divisor prime graph} defined as follows:
\begin{definition}\cite{nair2022divisor} For any positive integer $n \ge 1$ with $r$ divisors $d_1, d_2, \dots, d_r$, the \textbf{divisor prime graph} $G_{Dp(n)}$ is a graph with vertex set $V = \{d_1, d_2, \dots, d_r\}$ such that two distinct vertices $d_i$ and $d_j$ are adjacent if and only if $\gcd(d_i, d_j) = 1$. Since we consider only simple graphs, the possible loop at vertex $1$ is neglected.
\end{definition}

In this paper, we systematically analyze the algebraic structure defined by the prime factorization of $n$ to compute exact, generalized closed-form expressions for a wide array of topological descriptors of $G_{Dp(n)}$. Specifically, we derive the exact formulas for distance-based invariants (Wiener, Harary, hyper-Wiener indices), degree-based invariants (First and Second Zagreb indices), and mixed invariants (Schultz, Gutman, and Eccentric Connectivity indices). 

\section{Preliminaries}
In this section, we introduce the fundamental notations, graph-theoretic terminology, and structural observations concerning the divisor prime graph that will be utilized throughout the proofs in this paper. 

\vspace{2mm}
\noindent \textbf{Basic Graph Theory Notations:}
Let $G = (V, E)$ be a simple, undirected, and connected graph, where $V(G)$ represents the vertex set and $E(G)$ represents the edge set. The number of vertices, $|V(G)|$, is called the order of $G$, and the number of edges, $|E(G)|$, is called the size of $G$. 

For any vertex $v \in V(G)$, the degree of $v$, denoted by $d(v)$, is the number of edges incident to $v$. The distance between two vertices $u$ and $v$, denoted by $d(u,v)$, is the length of the shortest path connecting them in $G$. The eccentricity of a vertex $v$, denoted by $\varepsilon(v)$, is the maximum distance between $v$ and any other vertex in $G$, mathematically defined as $\varepsilon(v) = \max_{u \in V(G)} d(v, u)$. 

\vspace{2mm}
\noindent \textbf{Structure of $G_{Dp(n)}$:}
Let the prime factorization of $n$ be $n = p_1^{k_1} p_2^{k_2} \dots p_r^{k_r}$, where $p_i$ are distinct prime numbers and $k_i \ge 1$ are positive integers. The total number of divisors of $n$, which corresponds to the total number of vertices in $G_{Dp(n)}$, is given by the divisor function:
$$|V| = D = \prod_{i=1}^r (k_i + 1).$$
Because the number $1$ is a divisor of every positive integer, it is always a vertex in $G_{Dp(n)}$. Furthermore, since $\gcd(1, u) = 1$ for any positive integer $u$, the vertex $1$ is adjacent to every other vertex in the graph. We refer to $1$ as the \textit{central vertex}.

\subsection{Distance Based Topological Indices}

The first use of a topological index was made in 1947 by chemist Wiener. The \textbf{Wiener index} \cite{wiener1947structural} is the oldest topological index, denoted by $W(G)$, which is defined as the sum of distances between every pair of vertices of $G$, i.e.,
$$W(G) = \sum_{u,v \in V} d(u,v),$$
where $d(u,v)$ is the length of the shortest distance between $u$ and $v$. 
\\The \textbf{Harary index} \cite{plavsic1993indices} was introduced independently in 1993 by Plavsi\'{c} et al. The index was named in honor of Professor Frank Harary on the occasion of his $70^{th}$ birthday. Originally developed for the characterization of molecular graphs, the Harary index is derived from the reciprocal distance matrix and is known to possess significant physical and chemical applications.
It is calculated for a connected graph $G$ by the formula:
$$H(G) = \sum_{u,v \in V} \frac{1}{d(u, v)}.$$
Where, $d(u,v)$ denotes the shortest-path distance between vertices $u$ and $v$ within the graph. 
\\Later, Randić introduced the \textbf{hyper-Wiener index} \cite{randic1993novel} to provide a more sensitive measure of molecular branching. It incorporates both the distances and the squares of the distances:
$$WW(G) = \frac{1}{2} \sum_{u,v \in V} (d(u,v) + d(u,v)^2).$$
\subsection{Degree-Based Topological Indices}
While distance-based indices capture the overall spread of a graph, degree-based indices focus on local connectivity. Introduced by Gutman and Trinajstić in 1972 to calculate the total $\pi$-electron energy of alternant hydrocarbons, the Zagreb indices are among the most important degree-based invariants.
The \textbf{First Zagreb index} \cite{cvetkovic1972graph} is defined as the sum of the squares of the degrees of all vertices:
$$M_1(G) = \sum_{v \in V} d(v)^2 = \sum_{uv \in E} (d(u) + d(v)).$$
The \textbf{Second Zagreb index} \cite{cvetkovic1972graph} is defined as the sum of the products of the degrees of pairs of adjacent vertices:
$$M_2(G) = \sum_{u,v \in V} d(u)d(v).$$
\subsection{Mixed Topological Indices}
To capture a more holistic topological profile, several indices have been developed that combine both distance and degree properties. \textbf{Schultz index} (or Molecular Topological index) \cite{Schultz1989}, introduced in 1989, weighs the sum of the degrees of two vertices by the distance between them:
        $$S(G) = \sum_{u,v \in V} (d(u) + d(v)) d(u,v).$$
Similarly, the \textbf{Gutman index} \cite{Gutman1994} (sometimes referred to as the Schultz index of the second kind) weighs the product of the degrees by the distance:
$$Gut(G) = \sum_{u,v \in V} d(u)d(v) d(u,v).$$
Finally, the \textbf{Eccentric Connectivity index} \cite{Sharma1997} integrates the degree of a vertex with its eccentricity, providing a highly discriminatory measure of molecular structure:
$$\xi^c(G) = \sum_{v \in V} d(v) \varepsilon(v).$$
\begin{lemma} \label{dia_lemma}\cite{kalita2025properties}  For all $n \in \mathbb{Z}^+$, $\text{diam}(G_{Dp(n)}) \le 2$.
\end{lemma}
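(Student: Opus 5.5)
The plan is to use the central vertex $1$ as a universal hub. As observed in the description of the structure of $G_{Dp(n)}$, $\gcd(1,u)=1$ for every divisor $u$ of $n$, so $1$ is adjacent to every other vertex of the graph. From this, a bound of $2$ on all pairwise distances follows directly, and hence on the diameter.

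The steps are as follows. First, treat the degenerate case $n=1$: here $D(1)=\{1\}$, the graph is a single vertex, and its diameter is $0\le 2$. Next, for $n\ge 2$, note that $G_{Dp(n)}$ is connected, since every vertex lies in the closed neighbourhood of $1$; thus $d(u,v)$ is finite for all pairs. Now take two distinct vertices $u,v$ and split into cases.
\begin{itemize}
\item If one of them equals $1$, then they are adjacent and $d(u,v)=1$.
\item If $u,v\neq 1$ and $\gcd(u,v)=1$, then again $d(u,v)=1$.
\item Otherwise, $u-1-v$ is a path of length $2$, using the adjacencies of $1$ with both $u$ and $v$, so $d(u,v)\le 2$.
\end{itemize}
Taking the maximum over all pairs gives $\text{diam}(G_{Dp(n)})=\max_{u,v} d(u,v)\le 2$.

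There is no real obstacle here. The only points needing care are the trivial one-vertex case and the convention that the loop at $1$ is discarded. That convention does not affect adjacency between $1$ and the other vertices. Optionally, one could remark that equality holds exactly when some pair of divisors $u,v>1$ shares a prime factor, that is, when $n$ is not squarefree or has at least two prime factors. This is not required for the stated bound.
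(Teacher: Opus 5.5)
Your proof is correct and is essentially the argument the paper relies on. The paper quotes the lemma from an external reference without proving it, but in the Preliminaries and in the proofs of the Gutman and Schultz theorems it justifies the bound exactly as you do: the central vertex $1$ is adjacent to every other vertex, so any two vertices are joined by a path through $1$. Your treatment of the one-vertex case $n=1$ is accurate, and so is your side remark that equality holds precisely when $n$ is composite.
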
 
\noindent \textbf{Basic Observations:}
The proofs of our main topological indices rely heavily on the specific edge counts and distance distributions within $G_{Dp(n)}$. We establish the following structural properties as observations.

\noindent \textbf{Observation (Vertex Degrees).} Let $n = p_1^{k_1} p_2^{k_2} \dots p_r^{k_r}$. For any vertex $u \in V(G_{Dp(n)})$, let $S_u$ be the set of indices corresponding to the prime factors present in the factorization of $u$. The degree of $u$ in $G_{Dp(n)}$ is:
$$d(u) = \prod_{i \notin S_u} (k_i + 1).$$
Specifically, for the central vertex $1$, $S_1 = \emptyset$, yielding $d(1) = D - 1$.

\noindent \textbf{Observation (Total Edges).}
The total number of edges in the divisor prime graph $G_{Dp(n)}$ is given by:
$$|E| = \frac{1}{2} \left( \prod_{i=1}^r (2k_i + 1) - 1 \right).$$

\begin{proof}
To find $|E|$, we count the total number of ordered pairs $(u, v)$ of divisors such that $\gcd(u, v) = 1$. 
Any divisors $u$ and $v$ can be uniquely written as $u = \prod_{i=1}^r p_i^{a_i}$ and $v = \prod_{i=1}^r p_i^{b_i}$, where $0 \le a_i \le k_i$ and $0 \le b_i \le k_i$. 
The condition $\gcd(u, v) = 1$ is satisfied if and only if for every prime factor $p_i$, at least one of the exponents $a_i$ or $b_i$ is zero (i.e., $\min(a_i, b_i) = 0$). 
For a fixed prime $p_i$, the number of valid pairs $(a_i, b_i)$ is determined as follows:
\begin{itemize}
    \item If $a_i = 0$, $b_i$ can take any of the $k_i + 1$ values from $\{0, 1, \dots, k_i\}$.
    \item If $b_i = 0$, $a_i$ can take any of the $k_i$ non-zero values from $\{1, 2, \dots, k_i\}$ (to avoid double-counting the $a_i=0, b_i=0$ case).
\end{itemize}
Thus, there are $(k_i + 1) + k_i = 2k_i + 1$ valid pairs of exponents for each prime $p_i$. Since the choices for each prime factor are independent, the total number of ordered pairs $(u, v)$ such that $\gcd(u,v) = 1$ is $\prod_{i=1}^r (2k_i + 1)$.

This total count includes:
\begin{enumerate}
    \item Pairs where $u = v$. Since $\gcd(u, u) = u$, this is only satisfied when $u = 1$, contributing exactly $1$ pair.
    \item Pairs where $u \neq v$. Each undirected edge $\{u, v\}$ corresponds to exactly $2$ ordered pairs: $(u, v)$ and $(v, u)$, contributing $2|E|$ pairs.
\end{enumerate}
Equating our counts, we have $\prod_{i=1}^r (2k_i + 1) = 1 + 2|E|$. Solving for $|E|$ yields the desired result.
\end{proof}
These preliminary definitions and observations establish the exact combinatorial parameters required to compute the distance and degree-based topological indices in the subsequent sections.

\section{Main Results}

\begin{theorem} \label{thmwiener}
Let $n = p_1^{k_1} p_2^{k_2} \dots p_r^{k_r}$ be the prime factorization of a positive integer $n$, where $p_i$ are distinct primes and $k_i \ge 1$. Let $D = \prod_{i=1}^r (k_i + 1)$ be the total number of divisors of $n$. The Wiener index of the divisor prime graph $G_{Dp(n)}$ is given by:
$$W(G_{Dp(n)}) = D(D - 1) - \frac{1}{2} \left( \prod_{i=1}^r (2k_i + 1) - 1 \right).$$
\end{theorem}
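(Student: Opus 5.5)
The plan is to use the diameter bound of Lemma~\ref{dia_lemma}: every pair of distinct vertices is at distance $1$ or $2$. Writing $N = \binom{D}{2}$ for the number of unordered pairs of distinct vertices, exactly $|E|$ of these pairs are at distance $1$. The remaining $N - |E|$ pairs are at distance $2$. Hence the Wiener index, read as a sum over unordered pairs, splits as
$$W(G_{Dp(n)}) = 1\cdot |E| + 2\,(N - |E|) = 2N - |E| = D(D-1) - |E|.$$
Substituting the edge count from the Observation (Total Edges), $|E| = \tfrac12\bigl(\prod_{i=1}^r (2k_i+1) - 1\bigr)$, gives exactly the claimed formula.

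The steps, in order, are:
\begin{enumerate}
\item Note that $G_{Dp(n)}$ is connected, since the central vertex $1$ is adjacent to all other vertices. Hence all distances are finite, and by Lemma~\ref{dia_lemma} (or directly, via the path $u$--$1$--$v$) every non-adjacent pair of distinct vertices has distance exactly $2$.
\item Partition the $\binom{D}{2}$ unordered pairs into edges and non-edges.
\item Sum the distances over this partition.
\item Insert the formula for $|E|$.
\end{enumerate}

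The only real subtlety is the convention in the definition of $W(G)$. The paper writes $\sum_{u,v\in V}$, and the formula must be read as a sum over unordered pairs of distinct vertices; summing over ordered pairs would double the value. I would state this convention explicitly at the start of the proof. I would also check it on a small case, say $n = p$: there $D = 2$ and the graph is $K_2$, so $W = 1$, and the formula gives $2 - \tfrac12(3-1) = 1$, as required.

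It is also worth checking the degenerate case $n = 1$, where the graph has a single vertex. There $D = 1$ and the empty product is $1$, so the formula gives $0$, which is consistent. No further obstacle is expected, because the edge count has already been established.
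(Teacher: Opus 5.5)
Your proposal is correct and follows essentially the same route as the paper: it uses the diameter bound to split the $\binom{D}{2}$ unordered pairs into edges (distance $1$) and non-edges (distance $2$), obtains $W = D(D-1) - |E|$, and then substitutes the edge count from the Observation. Your explicit remark that the sum is over unordered pairs of distinct vertices, and your checks of $n = p$ and $n = 1$, are welcome additions that the paper leaves implicit.
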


\begin{proof}
Let $V$ be the vertex set of $G_{Dp(n)}$. By definition, the vertices are the divisors of $n$, yielding a total of $|V| = D = \prod_{i=1}^r (k_i + 1)$ vertices. 

By Lemma \ref{dia_lemma}, the diameter of $G_{Dp(n)}$ is at most 2. Therefore, for any unordered pair of distinct vertices $\{u, v\}$, the shortest path distance $d(u,v)$ is exactly $1$ if they form an edge, and exactly $2$ if they do not. 

Let $|E|$ represent the total number of edges in $G_{Dp(n)}$. The Wiener index $W(G)$ can be calculated by grouping the $\binom{D}{2}$ total pairs by their distances:\
\begin{align*}
W(G_{Dp(n)})& = \sum_{uv \in E} d(u,v) + \sum_{uv \notin E} d(u,v)\\
& = 1 \cdot |E| + 2 \cdot \left( \binom{D}{2} - |E| \right)\\
&= 2 \binom{D}{2} - |E| = D(D-1) - |E|.
\end{align*}
From our earlier Observation regarding total edges, we know:
$$|E| = \frac{1}{2} \left( \prod_{i=1}^r (2k_i + 1) - 1 \right).$$
Substituting this explicit value of $|E|$ back into our Wiener index equation yields:
$$W(G_{Dp(n)}) = D(D - 1) - \frac{1}{2} \left( \prod_{i=1}^r (2k_i + 1) - 1 \right).$$
This completes the proof.
\end{proof}

\begin{example}Wiener index for the divisor prime graph of $n = 12$.
\end{example}

For $n = 12 = 2^2 \cdot 3^1$, the vertex set consists of $D = (2+1)(1+1) = 6$ divisors: $V = \{1, 2, 3, 4, 6, 12\}$. Two vertices are adjacent if they are relatively prime. The central vertex $1$ connects to all $5$ other vertices. Among the remaining vertices, only the pairs $\{2,3\}$ and $\{3,4\}$ satisfy $\gcd(u,v) = 1$. Thus, the graph contains exactly $|E| = 7$ edges (pairs at distance 1).

Because the maximum diameter is 2, the remaining unordered pairs of vertices must be at a distance of 2. Out of the $\binom{6}{2} = 15$ total pairs, $15 - 7 = 8$ pairs are at distance 2.

Summing the distances of all pairs manually yields:
$$W(G_{Dp(12)}) = 7(1) + 8(2) = 23.$$

Applying our newly derived formula in Theorem \ref{thmwiener} confirms this result seamlessly. Substituting $D = 6$ and $|E| = 7$:
$$W(G_{Dp(12)}) = D(D - 1) - |E| = 6(5) - 7 = 30 - 7 = 23.$$

\begin{theorem} \label{thmharary}
Let $n = p_1^{k_1} p_2^{k_2} \dots p_r^{k_r}$ be the prime factorization of a positive integer $n$, where $p_i$ are distinct primes and $k_i \ge 1$. Let $D = \prod_{i=1}^r (k_i + 1)$ be the total number of divisors of $n$. The Harary index of the divisor prime graph $G_{Dp(n)}$ is given by:
$$H(G_{Dp(n)}) = \frac{D(D - 1) + \prod_{i=1}^r (2k_i + 1) - 1}{4}.$$
\end{theorem}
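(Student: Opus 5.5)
The plan mirrors the proof of Theorem \ref{thmwiener}. By Lemma \ref{dia_lemma}, every unordered pair of distinct vertices of $G_{Dp(n)}$ is at distance $1$ if it is an edge and at distance $2$ otherwise. In fact, because the central vertex $1$ is adjacent to every other vertex, any non-adjacent pair $u,v$ is joined by the path $u$--$1$--$v$, so the distance $2$ is attained directly. I will interpret the sum defining $H(G)$ over unordered pairs of distinct vertices, as was done for the Wiener index.

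I then split the sum into the $|E|$ adjacent pairs and the $\binom{D}{2}-|E|$ non-adjacent pairs:
\begin{align*}
H(G_{Dp(n)}) &= 1\cdot |E| + \frac{1}{2}\left(\binom{D}{2} - |E|\right)\\
&= \frac{1}{2}\binom{D}{2} + \frac{1}{2}|E| = \frac{D(D-1)}{4} + \frac{|E|}{2}.
\end{align*}

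Next I substitute the edge count from the Observation (Total Edges), $|E| = \tfrac12\left(\prod_{i=1}^r(2k_i+1)-1\right)$. This turns $\tfrac{|E|}{2}$ into $\tfrac14\left(\prod_{i=1}^r(2k_i+1)-1\right)$, and combining over the common denominator $4$ gives the stated formula.

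There is no genuine obstacle. The only point needing care is to check that $\prod(2k_i+1)-1$ appears with the correct coefficient $\tfrac14$, and to state the unordered-pair convention explicitly. As a sanity check I would verify $n=12$: there $D=6$ and $|E|=7$, so $H = 7 + 8/2 = 11$. The formula gives $(30+15-1)/4 = 11$, which agrees.
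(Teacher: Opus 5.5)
Your proof is correct and follows essentially the same route as the paper: you split the unordered pairs into $|E|$ edges at distance $1$ and $\binom{D}{2}-|E|$ non-edges at distance $2$, simplify to $\frac{D(D-1)}{4}+\frac{|E|}{2}$, and substitute the edge count from the Observation. Your extra remark that the path $u$--$1$--$v$ realizes distance $2$ directly makes the argument self-contained without Lemma~\ref{dia_lemma}, but it is not a different approach.
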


\begin{proof}
Let $V$ be the vertex set of $G_{Dp(n)}$, where $|V| = D = \prod_{i=1}^r (k_i + 1)$. 

By Lemma \ref{dia_lemma}, the diameter of $G_{Dp(n)}$ is at most 2. Thus, the shortest path distance $d(u,v)$ between any two distinct vertices is exactly $1$ if $uv \in E(G_{Dp(n)})$ and $2$ if $uv \notin E(G_{Dp(n)})$. 

The Harary index $H(G)$ partitions the $\binom{D}{2}$ total unordered pairs into edges and non-edges:
$$H(G_{Dp(n)}) = \sum_{uv \in E} \frac{1}{1} + \sum_{uv \notin E} \frac{1}{2} = |E| + \frac{1}{2} \left( \binom{D}{2} - |E| \right).$$
Simplifying this expression yields:
$$H(G_{Dp(n)}) = \frac{1}{2}|E| + \frac{D(D-1)}{4}.$$
By our earlier Observation on Total Edges, we know $$|E| = \frac{1}{2} \left( \prod_{i=1}^r (2k_i + 1) - 1 \right). $$Substituting this exact value into our simplified Harary index equation gives:
$$H(G_{Dp(n)}) = \frac{D(D-1)}{4} + \frac{1}{4} \left( \prod_{i=1}^r (2k_i + 1) - 1 \right).$$
Combining the terms over the common denominator yields the final formula:
$$H(G_{Dp(n)}) = \frac{D(D - 1) + \prod_{i=1}^r (2k_i + 1) - 1}{4}.$$
This completes the proof.
\end{proof}

\begin{example} Harary index for the divisor prime graph of $n = 12$.
\end{example}

For $n = 12 = 2^2 \cdot 3^1$, the graph consists of $D = 6$ vertices. As determined previously, there are exactly $|E| = 7$ edges (pairs at distance 1). Because the maximum diameter is 2, the remaining $\binom{6}{2} - 7 = 15 - 7 = 8$ unordered pairs must be at a distance of 2.

Summing the reciprocal distances manually yields:
$$H(G_{Dp(12)}) = 7\left(\frac{1}{1}\right) + 8\left(\frac{1}{2}\right) = 7 + 4 = 11.$$

We can seamlessly verify this result using the closed-form expression derived in Theorem \ref{thmharary}. Substituting $D = 6$, $k_1 = 2$, and $k_2 = 1$:
\begin{align*}
H(G_{Dp(12)})& = \frac{6(6 - 1) + (2(2) + 1)(2(1) + 1) - 1}{4}\\
& = 11.\\
\end{align*}

\begin{theorem}\label{thmhyperwiener}
Let $n = p_1^{k_1} p_2^{k_2} \dots p_r^{k_r}$ be the prime factorization of a positive integer $n$, where $p_i$ are distinct primes and $k_i \ge 1$. Let $D = \prod_{i=1}^r (k_i + 1)$ be the total number of divisors of $n$. The hyper-Wiener index of the divisor prime graph $G_{Dp(n)}$ is given by:
$$WW(G_{Dp(n)}) = \frac{3D(D-1)}{2} - \prod_{i=1}^r (2k_i + 1) + 1.$$
\end{theorem}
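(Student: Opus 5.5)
The plan is the same partition argument used for Theorem \ref{thmwiener} and Theorem \ref{thmharary}. By Lemma \ref{dia_lemma}, every unordered pair of distinct vertices is at distance $1$ if it is an edge and at distance $2$ otherwise. So the summand $\frac{1}{2}(d + d^2)$ in the definition of $WW$ takes only two values: $\frac{1}{2}(1+1) = 1$ on edges, and $\frac{1}{2}(2+4) = 3$ on non-edges. There are $\binom{D}{2}$ unordered pairs in total, with $|E|$ edges and $\binom{D}{2} - |E|$ non-edges.

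Summing the two contributions gives
$$WW(G_{Dp(n)}) = 1\cdot |E| + 3\left(\binom{D}{2} - |E|\right) = \frac{3D(D-1)}{2} - 2|E|.$$
Next I substitute the Observation on Total Edges, which gives $2|E| = \prod_{i=1}^r (2k_i+1) - 1$. This yields
$$\frac{3D(D-1)}{2} - \prod_{i=1}^r (2k_i+1) + 1,$$
exactly as stated.

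No step is a real obstacle. The only points needing care are conventions. First, the sum runs over unordered pairs of distinct vertices, matching the convention used for $W$ and $H$ in the earlier theorems. Second, the trivial case $n=1$ (where $D=1$, the product is empty and equal to $1$, and the formula gives $0$) is consistent with the graph having no pairs at all. As a check, I would compare against $n=12$, where $|E|=7$ and $D=6$: the formula gives $45 - 15 + 1 = 31$, and the direct count gives $7 + 3\cdot 8 = 31$.
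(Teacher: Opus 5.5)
Your proposal is correct and follows the paper's proof essentially step for step. You use Lemma~\ref{dia_lemma} to split the $\binom{D}{2}$ pairs into edges (contribution $1$) and non-edges (contribution $3$), get $\frac{3D(D-1)}{2} - 2|E|$, and substitute $2|E| = \prod_{i=1}^r (2k_i+1) - 1$; your sanity checks for $n=1$ and $n=12$ (values $0$ and $31$) are also accurate.
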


\begin{proof}
Let $V$ be the vertex set of $G_{Dp(n)}$, where $|V| = D = \prod_{i=1}^r (k_i + 1)$. 

By Lemma \ref{dia_lemma}, the diameter of $G_{Dp(n)}$ is at most 2. Thus, the shortest path distance $d(u,v)$ between any two distinct vertices is exactly $1$ if $uv \in E(G_{Dp(n)})$ and $2$ if $uv \notin E(G_{Dp(n)})$. 
The hyper-Wiener index $WW(G)$ is defined as:
$$WW(G) = \frac{1}{2} \sum_{u,v \in V} (d(u,v) + d(u,v)^2).$$
We partition this summation over the $\binom{D}{2}$ total unordered pairs into edges ($d=1$) and non-edges ($d=2$):
\begin{align*}
WW(G_{Dp(n)})& = \sum_{uv \in E} \frac{1}{2}(1 + 1^2) + \sum_{uv \notin E} \frac{1}{2}(2 + 2^2)\\
& = \sum_{uv \in E} 1 + \sum_{uv \notin E} 3\\ & = |E| + 3 \left( \binom{D}{2} - |E| \right).
\end{align*}
Simplifying this algebraic expression yields:
$$WW(G_{Dp(n)}) = 3\binom{D}{2} - 2|E| = \frac{3D(D-1)}{2} - 2|E|.$$
From our earlier Observation on Total Edges, we know $$2|E| = \prod_{i=1}^r (2k_i + 1) - 1.$$ 
Substituting this exact value into our simplified equation gives:
\begin{align*}
WW(G_{Dp(n)})& = \frac{3D(D-1)}{2} - \left( \prod_{i=1}^r (2k_i + 1) - 1 \right)\\
& = \frac{3D(D-1)}{2} - \prod_{i=1}^r (2k_i + 1) + 1.
\end{align*}
This completes the proof.
\end{proof}

\begin{example} Hyper-Wiener index for the divisor prime graph of $n = 15$.
\end{example}

For $n = 15 = 3^1 \cdot 5^1$, the vertex set consists of $D = (1+1)(1+1) = 4$ divisors: $V = \{1, 3, 5, 15\}$. 

Two vertices are adjacent if they are relatively prime. The central vertex $1$ connects to the other $3$ vertices. Among the remaining vertices, only $\{3,5\}$ satisfies $\gcd(u,v) = 1$. Thus, the graph contains exactly $|E| = 4$ edges (pairs at distance 1). Because the maximum diameter is 2, the remaining $\binom{4}{2} - 4 = 6 - 4 = 2$ unordered pairs must be at a distance of 2.

Evaluating the hyper-Wiener index summation manually, pairs at distance 1 contribute $\frac{1}{2}(1+1^2) = 1$, and pairs at distance 2 contribute $\frac{1}{2}(2+2^2) = 3$. Summing these yields:
$$WW(G_{Dp(15)}) = 4(1) + 2(3) = 4 + 6 = 10.$$

We can seamlessly verify this result using the closed-form expression derived in Theorem \ref{thmhyperwiener}. Substituting $D = 4$, $k_1 = 1$, and $k_2 = 1$:
\begin{align*}
WW(G_{Dp(15)})& = \frac{3(4)(4 - 1)}{2} - (2(1) + 1)(2(1) + 1) + 1\\
& = \frac{36}{2} - (3)(3) + 1\\
& = 10.
\end{align*}

\begin{theorem} \label{thmzagreb} Let $n = p_1^{k_1} p_2^{k_2} \dots p_r^{k_r}$ be the prime factorization of a positive integer $n$, where $p_i$ are distinct primes and $k_i \ge 1$. Let $D = \prod_{i=1}^r (k_i + 1)$ be the total number of divisors of $n$. The First and Second Zagreb indices of the divisor prime graph $G_{Dp(n)}$ are given by:
$$M_1(G_{Dp(n)}) = \prod_{i=1}^r \left( (k_i + 1)^2 + k_i \right) - 2D + 1$$
$$M_2(G_{Dp(n)}) = \frac{1}{2} \left[ D \prod_{i=1}^r (3k_i + 1) - 2 \prod_{i=1}^r (2k_i + 1) - D^2 + 2D \right].$$
\end{theorem}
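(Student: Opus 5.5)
The plan is to avoid the degree formula's special case at the central vertex by working with the auxiliary function
$$f(u)=\prod_{i\notin S_u}(k_i+1),$$
which counts all divisors $v$ (including possibly $v=u$) with $\gcd(u,v)=1$. By the Observation (Vertex Degrees), $d(u)=f(u)$ for every $u\neq 1$, while $d(1)=f(1)-1=D-1$ because the loop at $1$ is discarded. Every sum will first be computed with $f$ in place of $d$ and then corrected at the vertex $1$.

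For $M_1$, write $\sum_u d(u)^2=\sum_u f(u)^2-f(1)^2+(f(1)-1)^2=\sum_u f(u)^2-2D+1$. Writing $u=\prod p_i^{a_i}$, the quantity $f(u)^2$ factors over the primes: each prime contributes $(k_i+1)^2$ if $a_i=0$ and $1$ otherwise. Since the exponents are chosen independently, the sum factorizes as
$$\sum_u f(u)^2=\prod_{i=1}^r\left((k_i+1)^2+k_i\right).$$
This gives the stated formula for $M_1$ immediately.

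For $M_2$, I will use the edge sum $\sum_{uv\in E}d(u)d(v)$, which is the intended meaning of the definition. It equals half of the sum over ordered pairs $(u,v)$ with $u\ne v$ and $\gcd(u,v)=1$. The plan has three steps.
\begin{enumerate}
\item Compute $T=\sum f(u)f(v)$ over \emph{all} ordered coprime pairs, including $(1,1)$, by the same exponent-pair counting used in the Observation (Total Edges). For a fixed prime $p_i$, the exponent pair $(0,0)$ contributes $(k_i+1)^2$. The $k_i$ pairs $(0,b)$ with $b>0$ contribute $(k_i+1)$ each, and the $k_i$ pairs $(a,0)$ with $a>0$ contribute $(k_i+1)$ each. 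The per-prime total is therefore $(k_i+1)(3k_i+1)$, and hence $T=D\prod_i(3k_i+1)$.
\item Remove the diagonal term $(1,1)$, which contributes $f(1)^2=D^2$.
\item Correct the pairs involving the vertex $1$. Each ordered pair $(1,v)$ or $(v,1)$ with $v\ne1$ has true value $(D-1)f(v)$ rather than $Df(v)$, so the correction is $-2\sum_{v\ne1}f(v)$. The sum $\sum_v f(v)$ counts ordered coprime pairs, so it equals $\prod_i(2k_i+1)$ by the Observation (Total Edges). Hence the correction is $-2\left(\prod_i(2k_i+1)-D\right)$.
\end{enumerate}
Combining the three steps and halving yields
$$M_2=\tfrac12\left[D\prod_i(3k_i+1)-2\prod_i(2k_i+1)-D^2+2D\right].$$

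The main obstacle is the bookkeeping at the central vertex: both the loop exclusion and the degree $D-1$ rather than $D$. This is exactly why I pass through $f$, which is multiplicative over primes, and apply the corrections separately. I will sanity-check both formulas on $n=12$ or $n=15$.
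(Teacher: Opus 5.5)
Your proposal is correct and follows essentially the same route as the paper. Both use the auxiliary weight $w(u)=\prod_{i\notin S_u}(k_i+1)$ with a correction at the vertex $1$, the same prime-by-prime factorization giving $\prod_i\bigl((k_i+1)^2+k_i\bigr)$ for $M_1$, and the same per-prime count $(k_i+1)(3k_i+1)$ for the ordered coprime-pair sum in $M_2$. The only difference is bookkeeping: you correct the full ordered sum directly (removing $D^2$ and subtracting $2\sum_{v\ne 1}f(v)$), whereas the paper isolates the non-central edge sum from $S$ and then adds back the central-edge contribution $(D-1)\bigl(\prod_i(2k_i+1)-D\bigr)$, which is algebraically equivalent.
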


\begin{proof}
Let $V$ be the vertex set of $G_{Dp(n)}$, where $|V| = D$. By our earlier Observation on Vertex Degrees, the degree of the central vertex is $d(1) = D - 1$. For any $u > 1$, its degree is $d(u) = \prod_{i \notin S_u} (k_i + 1)$, where $S_u$ is the set of prime factors of $u$. 

To unify the summation, we define a weight function $w(u) = \prod_{i \notin S_u} (k_i + 1)$ for all $u \in V$. Note that $w(1) = D$, and $w(u) = d(u)$ for $u > 1$.

\vspace{2mm}
\noindent \textbf{First Zagreb index, $M_1(G)$:} \\
The index is $M_1(G) = \sum_{u \in V} d(u)^2$. Substituting our weight function $w(u)$ yields:
\begin{align*}
M_1(G_{Dp(n)}) &= d(1)^2 + \sum_{u > 1} w(u)^2\\ 
& = (D-1)^2 - D^2 + \sum_{u \in V} w(u)^2\\
& = -2D + 1 + \sum_{u \in V} w(u)^2.
\end{align*}
To evaluate the sum over all $V$, we consider the independent choices of exponents for each prime $p_i$ in $u$. If $p_i$ is absent ($1$ choice), it contributes a factor of $(k_i+1)^2$. If $p_i$ is present ($k_i$ choices), it contributes $1^2 = 1$.
$$\sum_{u \in V} w(u)^2 = \prod_{i=1}^r \left( 1 \cdot (k_i+1)^2 + k_i \cdot 1 \right) = \prod_{i=1}^r \left( (k_i+1)^2 + k_i \right).$$
Substituting this back establishes the final formula:
$$M_1(G_{Dp(n)}) = \prod_{i=1}^r \left( (k_i + 1)^2 + k_i \right) - 2D + 1.$$
\vspace{2mm}
\noindent \textbf{Second Zagreb index, $M_2(G)$:} \\
The index $M_2(G) = \sum_{uv \in E} d(u)d(v)$ is partitioned into edges incident to the central vertex $1$ and edges between non-central vertices:
$$M_2(G_{Dp(n)}) = d(1) \sum_{u > 1} d(u) +\sum_{\substack{u>1, v>1 \\ \gcd(u,v)=1}} d(u)d(v).$$
Using the same combinatorial technique, the sum of all weights is $$\sum_{u \in V} w(u) = \prod_{i=1}^r (2k_i + 1).$$ Thus, the non-central degree sum is $$\sum_{u > 1} d(u) = \prod_{i=1}^r (2k_i + 1) - D.$$
Therefore, $$d(1) \sum_{u > 1} d(u) = (D-1) \left( \prod_{i=1}^r (2k_i + 1) - D \right).$$

\noindent Next, we evaluate the unconstrained product sum for all coprime pairs: $$S = \sum_{\gcd(u,v)=1} w(u)w(v).$$ For each prime $p_i$, the valid exponent pairs $(a_i, b_i)$ are those where $\min(a_i, b_i) = 0$.
\begin{itemize}
    \item $a_i = 0, b_i = 0$ (1 pair): contributes $(k_i+1)(k_i+1) = (k_i+1)^2$
    \item $a_i = 0, b_i > 0$ ($k_i$ pairs): contributes $(k_i+1)(1) = k_i+1$
    \item $a_i > 0, b_i = 0$ ($k_i$ pairs): contributes $(1)(k_i+1) = k_i+1$
\end{itemize}
The sum of these contributions is $(k_i+1)^2 + 2k_i(k_i+1) = (k_i+1)(3k_i+1)$. Thus:
$$S = \prod_{i=1}^r (k_i+1)(3k_i+1) = D \prod_{i=1}^r (3k_i+1).$$
We isolate the non-central edges by subtracting terms involving the central vertex $1$ from $S$:
$$S = w(1)^2 + 2w(1)\sum_{u > 1} w(u) + 2 \sum_{\substack{u>1, v>1 \\ \gcd(u,v)=1}} d(u)d(v).$$
$$2 \sum_{\substack{u>1, v>1 \\ \gcd(u,v)=1}} d(u)d(v) = D \prod_{i=1}^r (3k_i+1) - D^2 - 2D \left( \prod_{i=1}^r (2k_i + 1) - D \right).$$
Dividing this by $2$ and adding the central edge contribution $(D-1)\left(\prod_{i=1}^r (2k_i + 1) - D\right)$ yields the simplified formula:
$$M_2(G_{Dp(n)}) = \frac{1}{2} \left[ D \prod_{i=1}^r (3k_i + 1) - 2 \prod_{i=1}^r (2k_i + 1) - D^2 + 2D \right].$$
This completes the proof.
\end{proof}

\begin{example} First and Second Zagreb indices for the divisor prime graph of $n = 20$.
\end{example}

For $n = 20 = 2^2 \cdot 5^1$, the graph $G_{Dp(20)}$ contains $D = 6$ vertices. By observing coprime adjacencies, we determine the vertex degrees: the central vertex $d(1) = 5$; the pure prime powers $d(2) = d(4) = 2$ and $d(5) = 3$; and the mixed products $d(10) = d(20) = 1$. The graph contains exactly $7$ edges (five connected to the center, plus $\{2,5\}$ and $\{4,5\}$).

\vspace{2mm}
\noindent \textbf{First Zagreb index ($M_1$):} \\
Manually summing the squared degrees yields:
$$M_1(G_{Dp(20)}) = 5^2 + 2(2^2) + 3^2 + 2(1^2) = 25 + 8 + 9 + 2 = 44.$$
Applying our generalized formula from Theorem \ref{thmzagreb} with $k_1=2$, $k_2=1$, and $D=6$ verifies this perfectly:
\begin{align*}
M_1(G_{Dp(20)}) &= \left[ ((2+1)^2 + 2) \cdot ((1+1)^2 + 1) \right] - 2(6) + 1\\
& = (11 \cdot 5) - 11 = 44.
\end{align*}

\vspace{2mm}
\noindent \textbf{Second Zagreb index ($M_2$):} \\
Manually summing the degree products of adjacent vertices gives $5(2+2+3+1+1) = 45$ for the central edges, and $2(3) + 2(3) = 12$ for the edges between pure powers:
$$M_2(G_{Dp(20)}) = 45 + 12 = 57.$$
Using our generalized formula from Theorem \ref{thmzagreb} confirms the combinatorial logic:
\begin{align*}
M_2(G_{Dp(20)})& = \frac{1}{2} \left[ 6(3(2)+1)(3(1)+1) - 2(2(2)+1)(2(1)+1) - 6^2 + 2(6) \right]\\
& = \frac{1}{2} \left[ 6(7)(4) - 2(5)(3) - 36 + 12 \right]\\
&= \frac{1}{2} [ 168 - 30 - 24 ] = 57.
\end{align*}

\noindent Hence, the theoretical formulas perfectly match the manual summations.

\begin{theorem}  \label{thmgut}Let $n = p_1^{k_1} p_2^{k_2} \dots p_r^{k_r}$ be the prime factorization of a positive integer $n$, where $p_i$ are distinct primes and $k_i \ge 1$. Let $D = \prod_{i=1}^r (k_i + 1)$ be the total number of divisors of $n$. The Gutman index of the divisor prime graph $G_{Dp(n)}$ is given by:
$$Gut(G_{Dp(n)}) = \left( \prod_{i=1}^r (2k_i + 1) - 1 \right)^2 - M_1(G_{Dp(n)}) - M_2(G_{Dp(n)}).$$
Where, $M_1(G_{Dp(n)})$ and $M_2(G_{Dp(n)})$ are the First and Second Zagreb indices of the graph.
\end{theorem}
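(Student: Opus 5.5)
The plan is to exploit the fact that, by Lemma \ref{dia_lemma}, every distance in $G_{Dp(n)}$ is either $1$ or $2$. Then the Gutman index reduces to an algebraic combination of three quantities that are already known: the degree sum (from the Observation on Total Edges), $M_1$, and $M_2$ (from Theorem \ref{thmzagreb}).

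First, I would write the sum over the $\binom{D}{2}$ unordered pairs of distinct vertices as $d(u,v) = 2 - [uv \in E]$. Since $d(u,v)=1$ for adjacent pairs and $d(u,v)=2$ for non-adjacent distinct pairs, this gives
$$Gut(G_{Dp(n)}) = 2\sum_{\{u,v\}} d(u)d(v) - \sum_{uv\in E} d(u)d(v) = 2\sum_{\{u,v\}} d(u)d(v) - M_2(G_{Dp(n)}),$$
where the second sum is recognised as the Second Zagreb index in its intended edge-sum form.

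Second, I would evaluate the unconstrained pair sum with the standard square-of-a-sum identity:
$$2\sum_{\{u,v\}} d(u)d(v) = \Big(\sum_{u\in V} d(u)\Big)^2 - \sum_{u\in V} d(u)^2 = \Big(\sum_{u\in V} d(u)\Big)^2 - M_1(G_{Dp(n)}).$$
By the handshake lemma, $\sum_{u} d(u) = 2|E|$. The Observation on Total Edges gives $2|E| = \prod_{i=1}^r(2k_i+1) - 1$. This can also be checked against the weight computation $\sum_u w(u) = \prod_{i=1}^r (2k_i+1)$ from the proof of Theorem \ref{thmzagreb}, corrected by $d(1) = w(1) - 1$. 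Substituting these identities yields exactly
$$\left(\prod_{i=1}^r (2k_i+1) - 1\right)^2 - M_1(G_{Dp(n)}) - M_2(G_{Dp(n)}).$$

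There is no real obstacle; the only point needing care is the bookkeeping around the unordered-pair convention. The definition of $Gut$ must be read as a sum over unordered pairs of distinct vertices, because the factor $2$ from non-edges has to match the factor $2$ in the square-of-sum identity. In the same way, $M_2$ must be taken over edges, as in the proof of Theorem \ref{thmzagreb}, rather than over all pairs as the displayed definition literally suggests.

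I would also remark on the degenerate cases. When $n=1$ both sides vanish. When the graph is complete, for example $n=p$, there are no distance-$2$ pairs, and the identity still holds because the derivation never assumes that non-edges exist. As a sanity check, I would verify the formula on $n=20$ using $M_1=44$, $M_2=57$ and $2|E|=14$, which gives $196-44-57=95$, and compare this with a direct summation.
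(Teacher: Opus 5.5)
Your proposal is correct and follows essentially the same route as the paper: split the pairs by distance $1$ or $2$, use the square-of-a-sum identity to express the all-pairs degree-product sum through $M_1$, and substitute the degree sum $\prod_{i=1}^r(2k_i+1)-1$. The differences are cosmetic. You write $d(u,v)=2-[uv\in E]$ directly instead of first isolating the non-edge sum, and you get the degree sum from the handshake lemma and the edge count rather than by adding the central and non-central degree sums.
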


\begin{proof}
Let $V$ be the vertex set of $G_{Dp(n)}$ with $|V| = D$. Because the central vertex $1$ is adjacent to all other vertices in the graph, the maximum shortest path distance between any two distinct vertices is $2$ from Lemma \ref{dia_lemma}. Thus, $d(u,v) = 1$ if $uv \in E$, and $d(u,v) = 2$ if $uv \notin E$.

The Gutman index $Gut(G)$ is defined as the sum of the degree products of all unordered pairs of distinct vertices, weighted by their shortest path distances:
$$Gut(G) = \sum_{\{u,v\} \subseteq V} d(u)d(v) d(u,v).$$
Partitioning this summation into edges and non-edges yields:
$$Gut(G) = \sum_{uv \in E} d(u)d(v)(1) + \sum_{uv \notin E} d(u)d(v)(2).$$
Recognizing that the sum over adjacent vertices is exactly the Second Zagreb index, $M_2(G)$, we simplify the expression to:
$$Gut(G) = M_2(G) + 2 \sum_{uv \notin E} d(u)d(v).$$
To evaluate the non-edge summation without direct counting, we utilize the standard algebraic identity for the sum of all cross-products: 
$$2 \sum_{u,v \in V} d(u)d(v) = \left(\sum_{v \in V} d(v)\right)^2 - \sum_{v \in V} d(v)^2.$$
Since $\sum_{v \in V} d(v)^2 = M_1(G)$ and the total sum of cross-products partitions into edges and non-edges, we can rewrite this identity as:
$$2 \left( \sum_{uv \in E} d(u)d(v) + \sum_{uv \notin E} d(u)d(v) \right) = \left(\sum_{v \in V} d(v)\right)^2 - M_1(G).$$
Substituting $M_2(G)$ for the edge sum and isolating the non-edge term gives:
$$2 \sum_{uv \notin E} d(u)d(v) = \left(\sum_{v \in V} d(v)\right)^2 - M_1(G) - 2M_2(G).$$
Substituting this result back into our partitioned Gutman equation yields a remarkably clean relationship:
$$Gut(G) = M_2(G) + \left[ \left(\sum_{v \in V} d(v)\right)^2 - M_1(G) - 2M_2(G) \right].$$
$$Gut(G) = \left(\sum_{v \in V} d(v)\right)^2 - M_1(G) - M_2(G).$$
Finally, we determine the total degree sum $\sum_{v \in V} d(v)$. Using the properties derived in previous theorems, the sum of the non-central degrees is $\prod_{i=1}^r (2k_i + 1) - D$, and the central degree is $d(1) = D - 1$. Therefore, the total degree sum is:
$$\sum_{v \in V} d(v) = (D - 1) + \left( \prod_{i=1}^r (2k_i + 1) - D \right) = \prod_{i=1}^r (2k_i + 1) - 1.$$
Squaring this sum and substituting it into the identity gives the final formula:
$$Gut(G_{Dp(n)}) = \left( \prod_{i=1}^r (2k_i + 1) - 1 \right)^2 - M_1(G_{Dp(n)}) - M_2(G_{Dp(n)}).$$
This completes the proof.
\end{proof}

\begin{example} 
Gutman index for the divisor prime graph of $n = 30$.
\end{example}

For $n = 30 = 2^1 \cdot 3^1 \cdot 5^1$, the graph $G_{Dp(30)}$ has $D = 8$ vertices. Based on their missing prime factors, the vertex degrees are: the central vertex $d(1) = 7$; the pure primes $d(2)=d(3)=d(5)=4$; the two-prime products $d(6)=d(10)=d(15)=2$; and the complete product $d(30)=1$. The total degree sum is $\sum_{v \in V} d(v) = 26$.

\vspace{2mm}
\noindent \textbf{Zagreb indices ($M_1$ and $M_2$):} \\
Summing the squared degrees yields the First Zagreb index:
$$M_1(G_{Dp(30)}) = 7^2 + 3(4^2) + 3(2^2) + 1^2 = 110.$$
Summing the degree products for all adjacent pairs yields the Second Zagreb index:
$$M_2(G_{Dp(30)}) = 133 \text{ (central)} + 48 \text{ (pure primes)} + 24 \text{ (mixed)} = 205.$$

\vspace{2mm}
\noindent \textbf{Gutman index ($Gut$):} \\
By definition, the Gutman index partitions into degree products weighted by distance $1$ (edges) and distance $2$ (non-edges). The sum of all pairwise degree products is $\frac{1}{2}(26^2 - 110) = 283$. The edge contribution is exactly $M_2 = 205$, leaving $283 - 205 = 78$ for the non-edges. Weighting these by distance gives:
$$Gut(G_{Dp(30)}) = 205(1) + 78(2) = 361.$$
Using our newly derived algebraic Theorem \ref{thmgut} confirms this instantly. With $k_1 = k_2 = k_3 = 1$:
\begin{align*}
 Gut(G_{Dp(30)}) &= \left( \prod_{i=1}^3 (2(1) + 1) - 1 \right)^2 - M_1(G_{Dp(30)}) - M_2(G_{Dp(30)})\\
& = (3 \cdot 3 \cdot 3 - 1)^2 - 110 - 205\\
&= (27 - 1)^2 - 315 = 26^2 - 315\\
& = 676 - 315 = 361.   
\end{align*}
The theoretical formula seamlessly confirms the manual structural calculation.

\begin{theorem}  \label{thmschultz}Let $n = p_1^{k_1} p_2^{k_2} \dots p_r^{k_r}$ be the prime factorization of a positive integer $n$, where $p_i$ are distinct primes and $k_i \ge 1$. Let $D = \prod_{i=1}^r (k_i + 1)$ be the total number of divisors of $n$. The Schultz index of the divisor prime graph $G_{Dp(n)}$ is given by:
$$S(G_{Dp(n)}) = 2(D - 1) \prod_{i=1}^r (2k_i + 1) - \prod_{i=1}^r \left( (k_i + 1)^2 + k_i \right) + 1.$$
\end{theorem}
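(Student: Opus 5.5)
The plan is to mirror the proofs of Theorems \ref{thmwiener} and \ref{thmgut}. By Lemma \ref{dia_lemma}, every unordered pair of distinct vertices is at distance $1$ if it is an edge and $2$ otherwise. Writing $2 = 2\cdot 1$ on non-edges and $1 = 2 - 1$ on edges, I will rewrite the Schultz index as
\begin{align*}
S(G_{Dp(n)}) &= \sum_{uv\in E}(d(u)+d(v)) + 2\sum_{uv\notin E}(d(u)+d(v))\\
&= 2\sum_{\{u,v\}\subseteq V}(d(u)+d(v)) \;-\; \sum_{uv\in E}(d(u)+d(v)).
\end{align*}
This reduces everything to two quantities that are already available.

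The first step is the sum over all unordered pairs. Each vertex $v$ lies in exactly $D-1$ unordered pairs, so
$$\sum_{\{u,v\}\subseteq V}(d(u)+d(v)) = (D-1)\sum_{v\in V} d(v).$$
The total degree sum was computed in the proof of Theorem \ref{thmgut} (equivalently, it is $2|E|$ from the Observation on Total Edges):
$$\sum_{v \in V} d(v) = \prod_{i=1}^r (2k_i+1) - 1.$$

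The second step is the edge sum. The term $\sum_{uv\in E}(d(u)+d(v))$ is exactly $M_1(G_{Dp(n)})$ by the second form of its definition. Theorem \ref{thmzagreb} gives
$$M_1(G_{Dp(n)}) = \prod_{i=1}^r\left((k_i+1)^2+k_i\right) - 2D + 1.$$

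Finally, write $P=\prod_{i=1}^r(2k_i+1)$ and $Q=\prod_{i=1}^r((k_i+1)^2+k_i)$. Substituting gives
$$S = 2(D-1)(P-1) - (Q - 2D + 1) = 2(D-1)P - 2D + 2 - Q + 2D - 1 = 2(D-1)P - Q + 1,$$
which is the claimed formula.

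There is no real obstacle here. The only point needing care is to read the definition of $S$ as a sum over unordered pairs of distinct vertices, as was done for $W$ and $Gut$. With that convention the identity $\sum_{uv\in E}(d(u)+d(v)) = M_1$ and the factor $D-1$ are exact. As a sanity check, I would verify the formula on $n=15$, where the degrees are $3,2,2,1$ and $S=34$.
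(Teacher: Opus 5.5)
Your proof is correct and follows the paper's own argument. The paper also reduces $S$ to $2(D-1)\sum_{v} d(v) - M_1(G_{Dp(n)})$, writing it as $M_1 + 2\left[(D-1)\sum_v d(v) - M_1\right]$, and then substitutes the degree sum and Theorem \ref{thmzagreb}. One slip: your sanity-check value for $n=15$ should be $S=30$, not $34$. The edges give $5+5+4+4=18$ and the two non-edges $\{3,15\},\{5,15\}$ give $2\cdot 3+2\cdot 3=12$, which agrees with the formula $2\cdot 3\cdot 9-25+1=30$.
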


\begin{proof}
Let $V$ be the vertex set of $G_{Dp(n)}$ with $|V| = D$. Because the central vertex $1$ is adjacent to all other vertices, the maximum shortest path distance between any two distinct vertices is $2$ from Lemma \ref{dia_lemma}. 

The Schultz index $S(G)$ evaluates the sum of the degrees of all unordered pairs of distinct vertices, weighted by their shortest path distances $d(u,v)$:
$$S(G) = \sum_{u,v \in V} (d(u) + d(v)) d(u,v).$$
Since $d(u,v) \in \{1, 2\}$, we partition this summation into edges (distance $1$) and non-edges (distance $2$):
$$S(G) = \sum_{uv \in E} (d(u) + d(v))(1) + \sum_{uv \notin E} (d(u) + d(v))(2).$$
By definition, the sum of the degree sums for adjacent vertices is exactly the First Zagreb index, $M_1(G)$. Thus, the equation simplifies to:
$$S(G) = M_1(G) + 2 \sum_{uv \notin E} (d(u) + d(v)).$$

To evaluate the sum over the non-edges without combinatorial counting, we first find the sum over all $\binom{D}{2}$ possible pairs. In a complete pairing of $D$ vertices, each vertex $u$ is paired with exactly $D - 1$ other vertices. Therefore, the degree $d(u)$ appears $D - 1$ times in the total sum:
$$\sum_{u,v \in V} (d(u) + d(v)) = (D - 1) \sum_{v \in V} d(v).$$
The sum over non-edges is simply the sum over all pairs minus the sum over the edges ($M_1(G)$):
$$\sum_{uv \notin E} (d(u) + d(v)) = (D - 1) \sum_{v \in V} d(v) - M_1(G).$$
Substituting this identity back into our partitioned Schultz equation yields a highly simplified relation:
$$S(G) = M_1(G) + 2 \left[ (D - 1) \sum_{v \in V} d(v) - M_1(G) \right]$$
$$S(G) = 2(D - 1) \sum_{v \in V} d(v) - M_1(G).$$
From our earlier derivations, we established the total degree sum as $\sum_{v \in V} d(v) = \prod_{i=1}^r (2k_i + 1) - 1$. Substituting this into the leading term gives:
$$2(D - 1) \sum_{v \in V} d(v) = 2(D - 1) \left( \prod_{i=1}^r (2k_i + 1) - 1 \right) = 2(D - 1) \prod_{i=1}^r (2k_i + 1) - 2D + 2.$$
Finally, we subtract the First Zagreb index, $M_1(G) = \prod_{i=1}^r \left( (k_i + 1)^2 + k_i \right) - 2D + 1$. 
Notice that the $-2D$ cross-terms perfectly cancel out:
$$S(G) = \left[ 2(D - 1) \prod_{i=1}^r (2k_i + 1) - 2D + 2 \right] - \left[ \prod_{i=1}^r \left( (k_i + 1)^2 + k_i \right) - 2D + 1 \right].$$
$$S(G_{Dp(n)}) = 2(D - 1) \prod_{i=1}^r (2k_i + 1) - \prod_{i=1}^r \left( (k_i + 1)^2 + k_i \right) + 1.$$
This completes the proof.
\end{proof}

\begin{example} 
Schultz index for the divisor prime graph of $n = 45$.
\end{example}

For $n = 45 = 3^2 \cdot 5^1$, the graph $G_{Dp(45)}$ contains $D = 6$ vertices. Based on their missing prime factors, the vertex degrees are: the central vertex $d(1) = 5$; the pure prime powers $d(5) = 3$ and $d(3) = d(9) = 2$; and the mixed products $d(15) = d(45) = 1$. The total degree sum is $\sum_{v \in V} d(v) = 14$.

\vspace{2mm}
\noindent \textbf{Schultz index ($S$):} \\
By definition, the Schultz index partitions into degree sums weighted by distance $1$ (edges) and distance $2$ (non-edges). The edge contribution is precisely the First Zagreb index:
$$M_1(G_{Dp(45)}) = \sum_{v \in V} d(v)^2 = 5^2 + 3^2 + 2(2^2) + 2(1^2) = 44.$$
As established in the theorem, the total sum of degrees for all $\binom{6}{2}$ pairs is $(D - 1)\sum d(v) = 5(14) = 70$. Subtracting the edge contribution leaves $70 - 44 = 26$ for the distance $2$ pairs. Weighting these by their distances yields:
$$S(G_{Dp(45)}) = 44(1) + 26(2) = 96.$$

\vspace{2mm}
\noindent \textbf{Verification using the generalized formula:} \\
Applying our finalized algebraic Theorem \ref{thmschultz} with $k_1 = 2$, $k_2 = 1$, and $D = 6$:
$$S(G_{Dp(n)}) = 2(D - 1) \prod_{i=1}^2 (2k_i + 1) - \prod_{i=1}^2 \left( (k_i + 1)^2 + k_i \right) + 1$$
$$S(G_{Dp(45)}) = 2(6 - 1) \left[ (2(2) + 1)(2(1) + 1) \right] - \left[ ((2+1)^2 + 2)((1+1)^2 + 1) \right] + 1 = 96.$$
The theoretical formula seamlessly confirms the structural calculation without requiring exhaustive edge enumeration.

\begin{theorem} \label{thmecc} Let $n = p_1^{k_1} p_2^{k_2} \dots p_r^{k_r}$ be the prime factorization of a positive integer $n$ (where $n$ is not prime, meaning $D \ge 3$), with distinct primes $p_i$ and $k_i \ge 1$. Let $D = \prod_{i=1}^r (k_i + 1)$ be the total number of divisors of $n$. The Eccentric Connectivity index of the divisor prime graph $G_{Dp(n)}$ is given by:
$$\xi^c(G_{Dp(n)}) = 2 \prod_{i=1}^r (2k_i + 1) - D - 1.$$
\end{theorem}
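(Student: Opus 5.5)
The plan is to determine the eccentricity of every vertex, then sum $d(v)\varepsilon(v)$ using the total degree sum $\sum_{v\in V} d(v) = \prod_{i=1}^r (2k_i+1) - 1$ obtained in the proof of Theorem \ref{thmgut}.

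\textbf{Central vertex.} The central vertex $1$ is adjacent to every other vertex, so $\varepsilon(1)=1$. Its contribution is $d(1)\cdot 1 = D-1$.

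\textbf{Non-central vertices.} By Lemma \ref{dia_lemma}, every vertex $u>1$ satisfies $\varepsilon(u)\le 2$. I will show that in fact $\varepsilon(u)=2$, by exhibiting some vertex $v\neq u$ with $\gcd(u,v)>1$, so that $d(u,v)=2$.

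\begin{itemize}
\item If $u\neq n$, take $v=n$. Then $\gcd(u,n)=u>1$.
\item If $u=n$, pick a prime $p_i$ dividing $n$ and take $v=p_i$. This $v$ is distinct from $n$ exactly when $n$ is not prime.
\end{itemize}

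This is where the hypothesis $D\ge 3$ (equivalently, $n$ not prime) is essential, and it is the only real obstacle in the argument. When $n$ is prime the graph is $K_2$, every vertex has eccentricity $1$, and the formula fails. The case $n=1$ is excluded as well, since then $D=1$.

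\textbf{Summation.} With these eccentricities,
$$\xi^c(G_{Dp(n)}) = (D-1)\cdot 1 + 2\sum_{u>1} d(u) = (D-1) + 2\Big(\sum_{v\in V} d(v) - (D-1)\Big).$$
Substituting the total degree sum $\prod_{i=1}^r (2k_i+1) - 1$ gives
$$\xi^c(G_{Dp(n)}) = 2\Big(\prod_{i=1}^r (2k_i+1) - 1\Big) - (D-1) = 2\prod_{i=1}^r (2k_i+1) - D - 1,$$
which is the stated formula.

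As a check, take $n=12$. The degrees are $d(1)=5$, $d(2)=d(4)=2$, $d(3)=3$, $d(6)=d(12)=1$, so $\sum_{u>1} d(u)=9$. The direct sum is $5\cdot 1 + 2\cdot 9 = 23$, and the formula gives $2\cdot 15 - 6 - 1 = 23$.
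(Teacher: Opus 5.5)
Your proof is correct and follows the paper's route. Both arguments use eccentricity $1$ at the central vertex and eccentricity $2$ at every other vertex, then substitute the degree sum $\prod_{i=1}^r(2k_i+1)-D$ over the non-central vertices. Your explicit witnesses, $v=n$ for $u\neq n$ and $v=p_i$ for $u=n$, make rigorous the step the paper only sketches with its remark that every non-central vertex ``shares a prime factor with at least one other vertex (or itself).''
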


\begin{proof}
Let $V$ be the vertex set of $G_{Dp(n)}$, where $|V| = D$. The Eccentric Connectivity index $\xi^c(G)$ is defined as the sum of the products of the degree $d(v)$ and the eccentricity $\varepsilon(v)$ of each vertex:
$$\xi^c(G) = \sum_{v \in V} d(v) \varepsilon(v).$$
where $\varepsilon(v)$ is the maximum shortest path distance from $v$ to any other vertex. 

By Lemma \ref{dia_lemma}, the diameter of the graph is $2$. We evaluate the eccentricities by partitioning the vertices into the central vertex and the non-central vertices:
\begin{itemize}
    \item \textbf{Central vertex ($1$):} Because it connects to all other $D-1$ vertices, its maximum distance to any vertex is exactly $1$. Thus, $\varepsilon(1) = 1$, and its degree is $d(1) = D - 1$.
    \item \textbf{Non-central vertices ($v > 1$):} Because $n$ is not prime ($D \ge 3$), every non-central vertex shares a prime factor with at least one other vertex (or itself), meaning no non-central vertex connects to all of $V$. However, since all vertices connect to $1$, the maximum distance between $v$ and any non-adjacent vertex is $2$. Thus, $\varepsilon(v) = 2$ for all $v \in V \setminus \{1\}$.
\end{itemize}

Partitioning the index summation based on these eccentricities yields:
$$\xi^c(G) = d(1)\varepsilon(1) + \sum_{v > 1} d(v)\varepsilon(v)$$
$$\xi^c(G) = (D - 1)(1) + 2 \sum_{v > 1} d(v).$$

From our earlier derivations of the graph's properties, the sum of the degrees of all non-central vertices is already established as:
$$\sum_{v > 1} d(v) = \prod_{i=1}^r (2k_i + 1) - D.$$
Substituting this direct degree sum into our partitioned equation gives:
$$\xi^c(G) = (D - 1) + 2 \left( \prod_{i=1}^r (2k_i + 1) - D \right)$$
$$\xi^c(G) = D - 1 + 2 \prod_{i=1}^r (2k_i + 1) - 2D.$$
Combining the $D$ terms yields the final simplified formula:
$$\xi^c(G_{Dp(n)}) = 2 \prod_{i=1}^r (2k_i + 1) - D - 1.$$
This completes the proof.
\end{proof}

\begin{corollary} 
Let $p$ be a prime number and $k \ge 1$ be an integer. The Eccentric Connectivity index of the divisor prime graph $G_{Dp(p^k)}$ is $3k$ for $k \ge 2$, and $2$ for $k = 1$.
\end{corollary}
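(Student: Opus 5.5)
The plan is to specialize Theorem~\ref{thmecc} to a single prime when $k \ge 2$, and to compute the case $k = 1$ directly, since $n = p$ is prime and is excluded from that theorem's hypotheses.

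\textbf{Case $k \ge 2$.} Take $n = p^k$, so $r = 1$, $k_1 = k$ and $D = k+1 \ge 3$. Then $n$ is not prime and Theorem~\ref{thmecc} applies, giving
$$\xi^c(G_{Dp(p^k)}) = 2(2k+1) - (k+1) - 1 = 3k.$$
As a sanity check I would also verify this from the structure. The vertices are $1, p, p^2, \dots, p^k$. Any two of the nontrivial powers share the factor $p$, so the graph is the star $K_{1,k}$ centered at $1$. The center has degree $k$ and eccentricity $1$. Each leaf $p^j$ has degree $1$ and eccentricity $2$, because $k \ge 2$ guarantees another leaf at distance $2$. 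The sum is $k + 2k = 3k$, which agrees with the theorem.

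\textbf{Case $k = 1$.} Here $n = p$ and the graph has only the vertices $1$ and $p$, joined by an edge since $\gcd(1,p) = 1$. So $G_{Dp(p)} \cong K_2$, where both vertices have degree $1$ and eccentricity $1$, giving $\xi^c = 1 + 1 = 2$.

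The only real obstacle is this $k = 1$ case. Plugging $k = 1$ into the formula of Theorem~\ref{thmecc} would wrongly give $3$, because that proof assumes every non-central vertex has eccentricity $2$. That assumption fails when the only non-central vertex is adjacent to everything, which is exactly why the case must be handled directly.
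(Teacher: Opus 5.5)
Your proof is correct and matches the paper's proof: for $k \ge 2$ you specialize Theorem~\ref{thmecc} with $r=1$ and $D=k+1$ to get $3k$, and for $k=1$ you compute directly on $G_{Dp(p)} \cong K_2$ to get $2$. Your extra check via the star $K_{1,k}$, which gives $k + 2k = 3k$, is a valid independent confirmation, and you are right that the theorem's formula would give $3$ at $k=1$.
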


\begin{proof}
We evaluate the index in two cases based on the value of $k$:

\vspace{2mm}
\noindent \textbf{Case 1: $k \ge 2$} \\
When $k \ge 2$, the integer $n = p^k$ is a higher prime power. Because $n$ is not prime, it has $D = k + 1 \ge 3$ total divisors. Thus, it satisfies the conditions of the preceding theorem, and we can apply the generalized formula directly. 
Substituting $r = 1$, $k_1 = k$, and $D = k + 1$ yields:
\begin{align*}
    \xi^c(G_{Dp(p^k)}) &= 2 \prod_{i=1}^1 (2k_i + 1) - D - 1\\
& = 2(2k + 1) - (k + 1) - 1\\
& = 4k + 2 - k - 2 = 3k.
\end{align*}

\vspace{2mm}
\noindent \textbf{Case 2: $k = 1$} \\
When $k = 1$, $n = p$ is prime, and the total number of divisors is $D = 2$. The graph $G_{Dp(p)}$ is simply the complete graph $K_2$ with the vertex set $V = \{1, p\}$ connected by a single edge. 
Because both vertices share a maximum shortest path distance of $1$, their eccentricities are $\varepsilon(1) = \varepsilon(p) = 1$. With degrees $d(1) = d(p) = 1$, the index is:
$$\xi^c(G_{Dp(p)}) = d(1)\varepsilon(1) + d(p)\varepsilon(p) = (1 \times 1) + (1 \times 1) = 2.$$
This completes the proof.
\end{proof}

\begin{example} 
Eccentric Connectivity index for the divisor prime graph of $n = 22$.
\end{example}

For $n = 22 = 2^1 \cdot 11^1$, the graph $G_{Dp(22)}$ contains $D = 4$ vertices. Following the logic of our theorem, we partition the vertices by their eccentricities. The central vertex $1$ connects to all others, yielding $d(1) = 3$ and $\varepsilon(1) = 1$. The non-central vertices $2, 11$, and $22$ have degrees $d(2)=2$, $d(11)=2$, and $d(22)=1$, respectively. Because they share prime factors but all connect to the center, their eccentricities are uniformly $\varepsilon(v) = 2$.

\vspace{2mm}
\noindent \textbf{Eccentric Connectivity index ($\xi^c$):} \\
Manually summing the products of the degrees and eccentricities yields:
$$\xi^c(G_{Dp(22)}) = d(1)\varepsilon(1) + \sum_{v > 1} d(v)\varepsilon(v)$$
$$\xi^c(G_{Dp(22)}) = 3(1) + \left[ 2(2) + 2(2) + 1(2) \right] = 3 + 10 = 13.$$

Applying our generalized formula from Theorem \ref{thmecc} with $k_1 = 1$, $k_2 = 1$, and $D = 4$ confirms this result instantly:
\begin{align*}
    \xi^c(G_{Dp(22)})& = 2 \prod_{i=1}^2 (2k_i + 1) - D - 1\\
& = 2(2(1)+1)(2(1)+1) - 4 - 1\\
& = 2(3 \cdot 3) - 5 = 18 - 5 = 13.
\end{align*}

The theoretical formula seamlessly confirms the manual structural calculation.

\section*{Acknowledgements}
The First author acknowledges the financial support received from CSIR under the CSIR-Junior Research Fellowship (JRF) scheme.

\end{document}